\newtheorem{thm}{Theorem}[section]
\newtheorem{lem}[thm]{Lemma}
\theoremstyle{definition}
\theoremstyle{remark}
\let\c@equation\c@thm
\numberwithin{equation}{section}
\title{On a Special Case of Dirichlet's Theorem}
\author{Jhixon Macías}
\address{Jhixon Macías\newline
University of Puerto Rico at Mayaguez, Mayaguez, PR, USA\newline
United States of America}
\email{jhixon.macias@upr.edu}
\keywords{Dirichlet's Theorem, Gap prime}
\subjclass[2020]{11B25; 11A41}
\begin{document}

\begin{abstract}
Let $p$ be a prime number, and $h$ a positive integer such that $\gcd(p,h)=1$. We prove, without invoking Dirichlet's theorem, that the arithmetic progression $p\left(\mathbf{N}\cup \{0\}\right)+h$ contains infinitely many prime numbers. This is a special case of Dirichlet's theorem not considered by other authors.
\end{abstract}

\maketitle
\section{Introduction}
Let $\mathbf{N}$ denote the set of positive integers. Dirichlet's Theorem \cite{dirichlet1837beweis} asserts that for two integers $h$ and $k>0$ such that $\gcd(h,k)=1$, the arithmetic progression
\begin{equation*}\label{e1}
    \mathbf{A}(h,k):=k\left(\mathbf{N}\cup \{0\}\right)+h
\end{equation*}
contains infinitely many prime numbers. Dirichlet's Theorem is established as a consequence of the following related asymptotic formula:
\begin{equation*}\label{e2}
    \sum_{\substack{p\leq x\\ p\equiv h\pmod k}}\frac{\log p}{p}=\frac{1}{\varphi(k)}\log x + O(1),
\end{equation*}
where $x>1$, and the sum extends over primes $p$ that are less than or equal to $x$ and congruent to $h$ modulo $k$.

Proving Dirichlet's Theorem is a challenging task; a proof can be found in \cite[chapter 7]{apostol1998introduction}. However, several authors later simplified Dirichlet's original proof. For instance, in 1949, Selberg \cite{selberg1949elementary} provided a non-analytic proof of this result, and in 1950, Harold N. Shapiro \cite{shapiro1950primes} also published a proof without using Dirichlet series. Despite these efforts, obtaining a straightforward non-analytic proof of this result remains elusive, except for special cases. Among these special cases are those of $\mathbf{A}(h,k)$ for certain tuples of positive integers $(h,k)$, such as $(1,4), (3,4), (4,5)$, $(3,8)$, $(5,8)$, and $(7,8)$, as detailed in \cite{apostol1998introduction, sierpinski1988elementary}.

Furthermore, there are additional special cases of Dirichlet's Theorem that are more general than those previously mentioned. For instance, according to \cite[Theorem 11]{erdHos2003topics}, for every positive prime $p$ and positive integer $c$, there exist infinitely many primes in $\mathbf{A}(1,p^c)$. Additionally, as proven in \cite[Theorem 1]{gauchman2001special}, for every positive integer $k$, there are infinitely many primes in $\mathbf{A}(1,k)$.

In this article, we establish the existence of infinitely many prime numbers for a special case of Dirichlet's Theorem that has not been explored by other authors. We prove that for any prime number $p$ and any positive integer $h$ such that $\gcd(h,p)=1$, the arithmetic progression $\mathbf{A}(h,p)$ contains infinitely many prime numbers. 

\section{Main Theorem}
Let $p_n$ denote the $n$-th prime. Then, $s:= \liminf (p_{n+1}-p_{n})<\infty$, i.e., there are infinitely many pairs of primes that differ by $s<\infty$, as shown in \cite{zhang2014bounded}.
Let us begin with the following Lemma,  which is proven without invoking Dirichlet's Theorem.
\begin{lem}\label{l1}
Let $k=\prod_{i=1}^{r}p_i>s$ a square-free number where each $p_i$ is a prime number greater than $2$. Then, $A(h,k)$ contains infinitely many prime numbers for every positive integer $h$ such that $\gcd(h,k)=1$.
\end{lem}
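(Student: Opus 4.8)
The plan is to fix $s=\liminf(p_{n+1}-p_n)$, work with the set
\[
  T \;=\; \bigl\{\, a \in (\mathbf{Z}/k\mathbf{Z})^{\times} \;:\; A(a,k)\ \text{contains infinitely many primes} \,\bigr\},
\]
and reduce the lemma to the assertion $T=(\mathbf{Z}/k\mathbf{Z})^{\times}$. First I would check that $T\neq\emptyset$: all but finitely many primes are coprime to $k$, there are infinitely many primes by Euclid, and there are only $\varphi(k)$ admissible residue classes, so one of them is represented infinitely often. I would also record the classical cyclotomic fact that every prime divisor of $\Phi_k(n)$ either divides $k$ or is congruent to $1$ modulo $k$; letting $n$ range over the integers then yields infinitely many primes $\equiv 1 \pmod k$, so $1\in T$, and this uses no Dirichlet-type input.

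The engine of the argument is Zhang's theorem in the form quoted above. For all but finitely many pairs of primes $(q,q+s)$ we have $q>k$, so both $q$ and $q+s$ are coprime to $k$; since $0<s<k$ — which is exactly where the hypothesis $k>s$ is used — their classes $a$ and $a+s$ modulo $k$ are distinct units, and in particular $a+s\not\equiv 0\pmod k$. Pigeonholing over the infinitely many such pairs produces a fixed $a^{\ast}\in(\mathbf{Z}/k\mathbf{Z})^{\times}$ such that infinitely many primes $q\equiv a^{\ast}\pmod k$ additionally have $q+s$ prime, whence $a^{\ast}\in T$ and $a^{\ast}+s\in T$. In other words, Zhang's theorem unconditionally transports one distinguished coprime class to the next one under the translation $x\mapsto x+s$.

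The heart of the matter — and what I expect to be the main obstacle — is to upgrade this to $T=(\mathbf{Z}/k\mathbf{Z})^{\times}$. The structure I would try to exploit is the Chinese Remainder decomposition $(\mathbf{Z}/k\mathbf{Z})^{\times}\cong\prod_{i=1}^{r}(\mathbf{Z}/p_i\mathbf{Z})^{\times}$, for which the squarefreeness and oddness of $k$ keep the bookkeeping clean, together with the fact that $x\mapsto x+s$ is a permutation of $\mathbf{Z}/k\mathbf{Z}$ whose orbits are the cosets of the cyclic group $\langle s\rangle$. The goal would be to show that $T$ is forced to be stable under enough translations $x\mapsto x+js$ — and, after shifting the basepoint by further pigeonholes (or by feeding in Maynard--Tao clusters of many consecutive primes in a bounded interval, all of whose members are automatically units mod $k$, and pigeonholing both their gap pattern and their residue), under enough compositions of such maps — to sweep out every class coprime to $k$. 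A single invocation of Zhang only moves one basepoint one step, so the real work is to combine infinitely many such moves while provably reaching a prescribed target $h$ rather than some uncontrolled residue; ruling out the adversarial scenario in which the primes conspire to stay inside a fixed proper nonempty union of classes is where I anticipate the difficulty concentrating. Once $T=(\mathbf{Z}/k\mathbf{Z})^{\times}$ is established the lemma follows at once for every $h$ with $\gcd(h,k)=1$, and the hypotheses that the $p_i$ are odd, distinct, and have product exceeding $s$ are precisely what make the inequality $0<s<k$ and this CRT argument available.
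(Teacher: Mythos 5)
Your proposal is an outline rather than a proof: everything you actually establish --- $T\neq\emptyset$ by pigeonhole, $1\in T$ via prime divisors of $\Phi_k(n)$, and the existence of a single pair $a^{\ast},a^{\ast}+s\in T$ by pigeonholing Zhang's bounded-gap pairs --- falls short of the lemma, which demands that \emph{every} class $h$ with $\gcd(h,k)=1$ lie in $T$. You say so yourself: the step of upgrading ``some classes'' to ``all classes'' is exactly where you ``anticipate the difficulty concentrating,'' and no mechanism is supplied for it. The translation idea cannot close this gap as stated: Zhang's theorem only tells you that infinitely many prime pairs differ by $s$, i.e.\ that \emph{some} residue $a^{\ast}$ (produced by pigeonhole, hence uncontrolled) has $a^{\ast}$ and $a^{\ast}+s$ both in $T$; it does not say that $T$ is closed under $x\mapsto x+s$, so you cannot iterate the shift from a known element of $T$, and even if you could, $\langle s\rangle$-cosets need not cover $(\mathbf{Z}/k\mathbf{Z})^{\times}$ (e.g.\ if $\gcd(s,k)>1$ the orbit of $1$ under $x\mapsto x+s$ never meets most units). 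The appeal to CRT and to Maynard--Tao clusters is likewise only a suggestion of where structure might come from, not an argument that rules out your ``adversarial scenario'' in which all but finitely many primes sit in a fixed proper nonempty union of coprime classes.

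For comparison, the paper attacks precisely that adversarial scenario head-on, by contradiction and iteration: if all but finitely many primes lay in a single class $\mathbf{A}(h_1,k)$, consecutive prime gaps would eventually be positive multiples of $k>s$, contradicting the fact that the gap $s$ occurs infinitely often; it then tries to repeat the argument when the primes are confined to two classes $h_1,h_2$ (using the prime number theorem to force the relevant quotients $f(n),g(n)$ apart), and so on until all $\varphi(k)$ ``containers'' are filled. Whether that multi-class step is itself airtight is a separate question, but it is the step your proposal leaves entirely open, so as it stands your argument proves only that at least two coprime residue classes modulo $k$ contain infinitely many primes, not the lemma.
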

\begin{proof}
Note that it is sufficient to verify that the proposition holds for every positive integer $h<k$ such that $\gcd(h, k) = 1$. Now, for every $n\in\mathbf{N}$, we define
\begin{equation*}
    \mathbf{G}(n):=\{m\in\mathbf{N} \ \ | \ \ \gcd(n,m)=1\}.
\end{equation*}
Let $p$ be a prime number. Then, it is clear that  $\gcd(m, p)>1$ if and only if $m=kp$ for some positive integer $k$. Consequently, we have that $\displaystyle \mathbf{G}(p)=\mathbf{N}\setminus \mathbf{M}(p),$
where $\mathbf{M}(p):=\{p,2p,3p, \dots \}$ is the set of multiples of $p$. It is straightforward to prove that, since $p$ is prime, the following equality holds:
\begin{equation*}
    \mathbf{N}\setminus\mathbf{M}(p)=\bigcup_{h=1}^{p-1}\mathbf{A}(h,p), \ \ \ \text{and therefore,} \ \ \  \mathbf{G}(p)= \bigcup_{h=1}^{p-1}\mathbf{A}(h,p).
\end{equation*}
So, for $k$ we have that:
\begin{equation*}
    \mathbf{G}(k)=\bigcap_{i=1}^r\mathbf{G}(p_i)=\bigcap_{i=1}^r\bigcup_{h=1}^{p_i-1}\mathbf{A}(h,p_i)=\bigcup_{\substack{1\leq h< k\\ \gcd(h,k)=1}}\mathbf{A}(h,k)
\end{equation*}
Note that each $\mathbf{A}(h,k)$ is disjoint from the others. Obviously, $\mathbf{G}(k)$ contains infinitely many primes, in particular, all prime numbers greater than $k$. Thus, since $\mathbf{G}(k)$ is a finite union of disjoint arithmetic progressions, at least one of them contains infinitely many primes. In other words, there exists a positive integer $1\leq h_1<k$ with $\gcd(h_1,k)=1$ such that $\mathbf{A}(h_1,k)$ contains infinitely many primes.

Now, let us consider the set
\begin{equation*}
    \mathbf{G}(h_1,k):=\mathbf{G}(k)\setminus\mathbf{A}(h_1,k)=\bigcup_{\substack{1\leq h< k\\ \gcd(h,k)=1\\
    h\neq h_1}}\mathbf{A}(h,k).
\end{equation*}
Suppose that $\mathbf{G}(h_1,k)$ contains only a finite number of primes. Then, there exists a positive integer $N_0$ such that for $n\geq N_0$, we have that $p_n=k\cdot f(n)+h_1,$
where $f(n)$ is an integer depending on $n$. Note that $f(n)$ is increasing for $n\geq N_0$. Indeed, in this case, we have that $k\cdot[f(n+1)-f(n)]=p_{n+1}-p_n>0,$
which implies $f(n+1)-f(n)>0$. In particular, since $p_{n+1}-p_{n}$ cannot be equal to $k$, we have that $f(n+1)-f(n)>1$. Then, for $n\geq N_0$, we obtain $p_{n+1}-p_n=k\cdot [f(n+1)-f(n)]>p> s,$ which implies that there can only be a finite number of consecutive primes that differ by less than $s$. This is absurd. Hence, $\mathbf{G}(h_1,k)$, which is a finite union of disjoint arithmetic progressions, contains infinitely many primes. Consequently, there exists a positive integer $h_2\neq h_1$ with $\gcd(h_2,k)=1$ and $h_2<k$ such that $\mathbf{A}(h_2,k)$ contains infinitely many primes.

Consider the set:
\begin{equation*}
    \mathbf{G}(h_1,h_2,k):=\left(\mathbf{G}(k)\setminus\mathbf{A}(h_1,k)\right)\setminus\mathbf{A}(h_2,k)= \bigcup_{\substack{1\leq h< k\\ \gcd(h,k)=1\\
    h\neq h_1\\ h\neq h_2}}\mathbf{A}(h,k).
\end{equation*}
Suppose that $\mathbf{G}(h_1,h_2,k)$ contains only a finite number of primes. Then, there exists a positive integer $N_1$ such that for $n\geq N_1$, we have that $p_n=k\cdot f(n)+h_1 \ \ \text{or} \ \ p_n=k\cdot g(n)+h_2,$
where $f(n)$ and $g(n)$ are integers depending on $n$. Without loss of generality, consider the following extreme case for a fixed $n\geq N_1$: $p_{n+1}=k\cdot g(n+1)+h_2$ and $p_n=k\cdot f(n)+h_1$. We aim to obtain a lower bound for $g(n+1)-f(n)$, and the desired bound is $2$.

By the prime number Theorem, we have that
$$\displaystyle \lim_{n\to\infty}\frac{p_n}{n\log n}=1,$$
and using the inequality $\displaystyle \frac{k\cdot f(n)}{n\log n}<\frac{k\cdot f(n)+h_1}{n\log n}$, and the fact that $\displaystyle\lim_{n\to\infty}\frac{h_1}{n\log n}=0$, we obtain:
\begin{equation*}
\lim_{n\to\infty}\frac{k\cdot f(n)}{n\log n}=1, \ \ \text{similarly,} \ \ \lim_{n\to\infty}\frac{k\cdot g(n)}{n\log n}=1.
\end{equation*}
Consequently,
\begin{equation*}
\lim_{n\to\infty}\frac{f(n)}{g(n)}=\lim_{n\to\infty}\frac{g(n)}{f(n)}=1.
\end{equation*}
So, taking $0 < \epsilon \ll 1$, there exists an $N_2$ such that for $n\geq N_2$:
\begin{equation*}
    g(n+1)-f(n)>g(n+1)-\frac{g(n)}{1-\epsilon},
\end{equation*}
and since $g(n+1)\geq g(n)+2$ for every $n$, 
\begin{equation*}
    g(n+1)-f(n)\geq g(n)-\frac{g(n)}{1-\epsilon}+2>\footnote{Note that for $0 < \epsilon \ll 1$, $g(n)-\frac{g(n)}{1-\epsilon}>-1$.}1.
\end{equation*}
Hence, for $n\geq \max\{N_1,N_2\}$, we have that
\begin{equation*}
    p_{n+1}-p_n=k\cdot[g(n+1)-g(n)]+(h_2-h_1)>\footnote{Note that $|h_2-h_1|<k.$}2k-k=k> s.
\end{equation*}
This implies that there can only be a finite number of consecutive primes that differ by less than $s$. Again, this is absurd. Hence, $\mathbf{G}(h_1,h_2,k)$, which is a finite union of disjoint arithmetic progressions, contains infinitely many primes. Consequently, there exists a positive integer $h_3\notin \{h_1,h_2\}$ between $1$ and $k$ with $\gcd(h,k)=1$ such that $\mathbf{A}(h_3,k)$ contains infinitely many primes. Finally, repeat this process until all the $\mathbf{A}(h,k)$ are filled with prime numbers. 
\end{proof}
Now we are prepared to prove our main Theorem.
\begin{thm}\label{mth}
Let $p$ a prime number. Then, for every positive integer $h$ such that $\gcd(h,k)=1$, the arithmetic progression $\mathbf{A}(h,p)$ contains infinitely many prime numbers.
\end{thm}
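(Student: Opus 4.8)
The plan is to deduce the theorem from Lemma~\ref{l1} by refining the progression $\mathbf{A}(h,p)$ modulo a suitable square-free multiple of $p$. The case $p=2$ is immediate and I would treat it first: if $\gcd(h,2)=1$ then $h$ is odd, so $\mathbf{A}(h,2)=\{h,h+2,h+4,\dots\}$ is exactly the set of odd integers at least $h$, and this already contains every prime greater than $h$. So assume from now on that $p$ is an odd prime.

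Next I would fix the modulus. Since there are infinitely many primes, I can choose finitely many distinct odd primes $q_1,\dots,q_t$, each different from $p$, large enough in number (or in size) that $k:=p\,q_1\cdots q_t>s$. Then $k$ is square-free, every prime factor of $k$ exceeds $2$, and $k>s$, so Lemma~\ref{l1} applies to $k$. Because $\gcd(h,p)=1$, the Chinese Remainder Theorem (applied to the pairwise coprime moduli $p,q_1,\dots,q_t$) furnishes an integer $h'$ with $1\le h'<k$, $h'\equiv h\pmod p$, and $h'\equiv 1\pmod{q_i}$ for all $i$. Then $\gcd(h',p)=\gcd(h,p)=1$ and $\gcd(h',q_i)=1$ for each $i$, so $\gcd(h',k)=1$, and Lemma~\ref{l1} yields infinitely many primes in $\mathbf{A}(h',k)$.

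Finally I would transfer this conclusion back. Since $p\mid k$ and $h'\equiv h\pmod p$, every term of $\mathbf{A}(h',k)$ is congruent to $h$ modulo $p$; and every such term that is at least $h$ --- i.e.\ all but finitely many of them --- belongs to $\mathbf{A}(h,p)=\{h,h+p,h+2p,\dots\}$. Hence $\mathbf{A}(h,p)$ contains infinitely many primes. The only delicate points are the separate handling of $p=2$ and checking that $k$ can be made simultaneously square-free, divisible by $p$, a product of odd primes, and larger than $s$; all the substantive work resides in Lemma~\ref{l1}, so I expect no further obstacle.
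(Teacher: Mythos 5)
Your proposal is correct and follows essentially the same route as the paper: apply Lemma \ref{l1} to a square-free odd modulus $k>s$ divisible by $p$, then descend from the progression modulo $k$ to $\mathbf{A}(h,p)$. If anything, your explicit Chinese Remainder lifting of $h$ (coprime to $p$) to an $h'$ coprime to $k$, together with the tail argument for terms $\geq h$, makes the descent more complete than the paper's own write-up, which only treats residues $h$ coprime to the whole auxiliary modulus $k$.
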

\begin{proof}
For $p=2$, the proposition is obviously satisfied. For $p>s$, the proposition holds by Lemma \ref{l1}. Thus, we need to examine what happens when $2<p\leq s$. Let $p_1, p_2, \ldots, p_r$ be these prime numbers.

Consider $q>s$ as the first prime number greater than $s$ and take $k:=q\cdot\displaystyle\prod_{i=1}^{r}p_i$. Clearly, $k>s$. Thus, by Lemma \ref{l1}, for any positive integer $h$ such that $\gcd(h,k)=1$, the arithmetic progression $\mathbf{A}(h,k)$ contains infinitely many prime numbers . Then, note that $\displaystyle\mathbf{A}(h,k)=\mathbf{A}(h,q)\cap\left(\bigcap_{i=1}^r\mathbf{A}(h,p_i)\right)$ and therefore $A(h,k)\subset\mathbf{A}(h,p_i)$ for every $i=1,2,\dots , r$. Hence, $\displaystyle\mathbf{A}(h,p_i)$ contains infinitely prime numbers for every $i=1,2,\dots , r$.
\end{proof}

\section{Some Remarks}
We would like to mention the following remarks:
\begin{enumerate}
    \item The proposition that $s<\infty$ is entirely independent of Dirichlet's Theorem. Instead, as indicated by the author in \cite{zhang2014bounded}, it is a consequence of the work in \cite{goldston2009primes} combined with the ideas presented in \cite{Y5, B1, B2, B3}. This ensures that we avoid any circular reasoning with Dirichlet's Theorem, thus enabling us to proceed with our current research. Furthermore, a more elementary proof of this result can be examined, as proposed by James Maynard in \cite{maynard2015small}, whose proof is primarily based on the Selberg sieve.
    \item Dirichlet's Theorem is equivalent to the set of prime numbers being dense in the set of natural numbers with the \textit{Golomb Topology} (also called relatively prime integer topology) see \cite{golomb1959connected}. Since then, there has been interest in proving the density of the set of prime numbers in the natural numbers under the Golomb topology, with the goal of obtaining a topological proof of Dirichlet's Theorem. In this context, Theorem \ref{mth} implies  density of prime numbers set in the positive integers set with the \textit{Kirch topology} (also called prime integer topology) see \cite{steen1978counterexamples} for more details of this topology. While this does not constitute density of the prime numbers in the set of positive integers with the Golomb Topology per se, it motivates the search for a topological proof of Dirichlet's Theorem.
    \item Finally, we would like to mention that the proof of Lemma \ref{l1} and Theorem \ref{mth} is elementary in the sense that it does not a priori employ highly advanced techniques from the field, except perhaps for being aware that the proof of $s < \infty$ is not straightforward or elementary. The concept behind the proof of Lemma \ref{l1} is to consider $\mathbf{G}(k)$ as a (finite) set of "containers" being gradually filled one by one with prime numbers. Since there are infinitely many prime numbers available, the containers will, in principle, end up containing infinitely many prime numbers. In this analogy, the containers represent the arithmetic progressions $\mathbf{A}(h,k).$
\end{enumerate}
\printbibliography
\end{document}